 \newtheorem{thm}{Theorem}[section]
 \newtheorem{lem}[thm]{Lemma}
 \newtheorem{prop}[thm]{Proposition}
 \newtheorem{defn}[thm]{Definition}
 \newtheorem{ex}{Example}
\newcommand{\N}{\mathbb{N}}
\newcommand{\Z}{\mathbb{Z}}
\newcommand{\Q}{\mathbb{Q}}
\newcommand{\F}{\mathbb{F}} 
\newcommand{\sri}{\twoheadrightarrow}
\newcommand{\iri}{\hookrightarrow}
\renewcommand{\l}{\ell}
\renewcommand{\L}{\Lambda}
\newcommand{\g}{\gamma}
\newcommand{\dl}[1]{\lim_{\buildrel \longrightarrow\over{#1}}}
\newcommand{\il}[1]{\lim_{\buildrel \longleftarrow\over{#1}}}
\newcommand{\ov}{\overline}
\newcommand{\wt}{\widetilde}
\renewcommand{\leq}{\leqslant}
\renewcommand{\geq}{\geqslant}
\DeclareMathOperator{\Coind}{Coind}
\DeclareMathOperator{\Ind}{Ind}
\DeclareMathOperator{\Hom}{Hom}
\DeclareMathOperator{\Gal}{Gal}
\DeclareMathOperator{\cd}{cd}
\font\tencyr=wncyr10
\def\cyr{\tencyr\cyracc}
\newcommand{\ts}{\mbox{\cyr Sh}}
\begin{document}

\title[On Euler characteristic]{On Euler characteristics of Selmer groups for abelian varieties over global
function fields}

\author{Maria Valentino}
\address{Scuola Normale Superiore di Pisa\\
Piazza dei Cavalieri, 7\\
56126 Pisa}

\email{maria.valentino84@gmail.com}

\thanks{The author is supported by the ERC-Grant n$^\circ$ 267273.}

\subjclass{Primary 11R23; Secondary 11R34.}

\keywords{Euler characteristic, Selmer groups, abelian varieties, function fields.}

\begin{abstract}
Let $F$ be a global function field of characteristic $p>0$, $K/F$ an $\l$-adic Lie extension ($\l\neq p$) and $A/F$ an abelian variety.
We provide Euler characteristic formulas for the $\Gal(K/F)$-module $Sel_A(K)_\l$.
\end{abstract}

\maketitle

\section{Introduction}
Let $\l\in \Z$ be  a prime and let $G$ be a profinite $\l$-adic Lie group of finite dimension $d\geq 1$.
Let $M$ be a $G$-module and consider the following properties\begin{itemize}
\item[{\bf 1.}] $H^i(G,M)$ is finite for any $i\geq 0$;
\item[{\bf 2.}] $H^i(G,M)=0$ for all but finitely many $i$.
\end{itemize}

\begin{defn}\label{EuCharDef}
{\em If a $G$-module $M$ verifies {\bf 1} and {\bf 2}, we define the} Euler cha\-rac\-te\-ri\-stic {\em of $M$ as
\[ \chi(G,M):=\prod_{i\geq 0} |H^i(G,M)|^{(-1)^i} \ . \]}
\end{defn}

Let $A/F$ be an abelian variety and, for any extension $L/F$ let $Sel_A(L)_\l$ be the $\l$-power part of the Selmer
group of $A$ over $L$ (for a precise definition see Section \ref{SettNot}). When $G$ is the Galois group of a field extension $L/F$, 
the study of the Euler characteristic $\chi(G,Sel_A(L)_\l)$ is a first step towards understanding the 
relation, predicted by the Iwasawa Main Conjecture, between a characteristic element for $Sel_A(L)_\l$ 
and a suitable $\l$-adic $L$-function. 

In the number field setting, various papers have considered Euler cha\-rac\-te\-ri\-stic formulas for the Selmer groups. Among 
these works, we mention that of Coates and Howson \cite{CH}, where they considered the extension generated by the 
$p$-power torsion points of an elliptic curve $\mathcal{E}/F$. Interesting generalizations can be found in the papers of Van Order \cite{VO} 
and Zerbes \cite{Z2} and \cite{Z1}.

The aim of this paper is to provide Euler characteristic formulas for the Selmer group in the function field case.
Let $F$ be a global function field of characteristic $p>0$ and let $K/F$ be an $\l$-adic Lie 
extension ($\l\neq p$) with Galois group $G$ and unramified outside a finite and nonempty set $S$ of primes of $F$. The case 
$\l=p$, which requires a few more technical tools related to flat cohomology, will be treated in a different paper \cite{BV3}.

Here is a summary of the present work. In Section \ref{AbVarl},
we provide two formulations for $\chi(G,Sel_A(K)_\l)$. The first (see Theorem \ref{ThmEuCh})
mainly depends on the cohomology and Euler characteristic of torsion points while the second (see Theorem \ref{ThmEuCh2}) 
involves more directly
the Tate-Shafarevich group $\ts(A/F)$ (hinting at a connection with values of $L$-functions and the Birch and Swinnerton-Dyer conjecture).
In Section \ref{EllCur}, we specialize our formulas to the case of elliptic curves (all notation are
canonical and will be explained in the next section or soon as they appear) and obtain (see Theorem \ref{EuChEllCur}) 
\begin{thm}
Assume $Sel_{\mathcal{E}}(F)_\l$ is finite, 
$H^2\!(F_S/K,\mathcal{E}[\l^\infty\!])\!\!=\!\!0$, $\chi(G,\mathcal{E}(K)[\l^\infty\!])$ and $\chi(G_v,\mathcal{E}(K_w)[\l^\infty])$ are well defined for any $w|v\in S$
and the map $\psi_K$ in the sequence
\begin{equation}\label{EqSel} Sel_{\mathcal{E}}(K)_\l \iri H^1(F_S/K,\mathcal{E}[\l^\infty]) \stackrel{\psi_K}{-\!\!-\!\!\!\longrightarrow}
\prod_{v\in S} \Coind_G^{G_v} H^1(K_w,\mathcal{E}[\l^\infty]) \end{equation}
is surjective. If $\l\geq 5$ we have
\begin{equation}\label{EqCharE}
\begin{array}{ll} \!\!\chi(G,Sel_{\mathcal{E}}(K)_\l) \!\!= \!\!& \displaystyle{\!\! \!\!\chi(G,\mathcal{E}(K)[\l^\infty]) \frac{|\ts(\mathcal{E}/F)[\l^\infty]|}{|\mathcal{E}(F)[\l^\infty]|^2} }
\cdot \displaystyle{\!\!\!\! \!\!\!\!\prod_{\begin{subarray}{c} v\in S \\ \cd_\l(K_w)=1 \end{subarray}} 
\!\!\!\!\!\!\frac{|\mathcal{E}(F_v)[\l^\infty]|}{\chi(G_v,\mathcal{E}(K_w)[\l^\infty])} }\cdot \\ [6ex]
 & \cdot \displaystyle{\!\! \prod_{\begin{subarray}{c} v\in S\\ \cd_\l(K_w)=0\\ v\,{\rm good\,red.} \end{subarray}} 
\!\!\!|L_v(\mathcal{E},1)|_\l \cdot \!\!\!\prod_{\begin{subarray}{c} v\in S\\ \cd_\l(K_w)=0\\ v\,{\rm split\, mult.\,red.}\end{subarray}} 
\!\!\!\!\!\!\left|\frac{L_v({\mathcal{E}},1)}{c_v}\right|_\l }\end{array}
\end{equation}
\end{thm}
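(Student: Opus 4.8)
The plan is to obtain this statement by specializing the general Euler characteristic formula of Theorem~\ref{ThmEuCh2} to the case $A=\mathcal{E}$ and then making every local factor explicit. First one matches hypotheses: the conditions $H^2(F_S/K,\mathcal{E}[\l^\infty])=0$, surjectivity of $\psi_K$, and well-definedness of $\chi(G,\mathcal{E}(K)[\l^\infty])$ and of the $\chi(G_v,\mathcal{E}(K_w)[\l^\infty])$ are exactly the running hypotheses of Theorem~\ref{ThmEuCh2}; the finiteness of $Sel_{\mathcal{E}}(F)_\l$ enters through the descent sequence
\[ 0\longrightarrow \mathcal{E}(F)\otimes\Q_\l/\Z_\l\longrightarrow Sel_{\mathcal{E}}(F)_\l\longrightarrow \ts(\mathcal{E}/F)[\l^\infty]\longrightarrow 0, \]
where, $\mathcal{E}(F)\otimes\Q_\l/\Z_\l$ being divisible, finiteness of $Sel_{\mathcal{E}}(F)_\l$ forces it to vanish, so that $|Sel_{\mathcal{E}}(F)_\l|=|\ts(\mathcal{E}/F)[\l^\infty]|$. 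Finally, an elliptic curve is canonically principally polarized, hence $\mathcal{E}\cong\mathcal{E}^\vee$ and the two torsion factors in the global term of Theorem~\ref{ThmEuCh2} collapse to $|\mathcal{E}(F)[\l^\infty]|^2$ in the denominator of \eqref{EqCharE}.

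The real work is the evaluation, for each $v\in S$, of the local contribution attached to $w\mid v$, and here I would split according to $\cd_\l(K_w)$. Since $\l\geq 5$, the group $G$ — and hence every decomposition subgroup $G_v$ — is $\l$-torsion free, so by Serre's results on cohomological dimension $\cd_\l(G_{K_w})=\cd_\l(G_{F_v})-\dim G_v=2-\dim G_v$; thus $\cd_\l(K_w)$ is $2,1$ or $0$ according as $\dim G_v$ is $0,1$ or $2$. If $\cd_\l(K_w)=2$, then $K_w/F_v$ is finite; using the local Euler--Poincar\'e formula, which gives $\chi(G_{F_v},\mathcal{E}[\l^n])=1$ for all $n$ since $\l\neq p$, together with local Tate duality for $\mathcal{E}[\l^\infty]$ and the self-duality of the Weil pairing, the corresponding factor is trivial, so such primes do not occur in \eqref{EqCharE}. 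If $\cd_\l(K_w)=1$, then $\cd_\l(G_{K_w})=1$ forces $H^j(K_w,\mathcal{E}[\l^\infty])=0$ for $j\geq 2$, and feeding this, together with $\chi(G_{F_v},\mathcal{E}[\l^\infty])=1$ and local duality, into the Hochschild--Serre spectral sequence of $1\to G_{K_w}\to G_{F_v}\to G_v\to 1$ expresses the local factor as $|\mathcal{E}(F_v)[\l^\infty]|/\chi(G_v,\mathcal{E}(K_w)[\l^\infty])$.

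The case $\cd_\l(K_w)=0$ is the heart of the matter and the step I expect to be the main obstacle. Here $H^i(K_w,\mathcal{E}[\l^\infty])=0$ for all $i\geq1$, so the surviving factor must be read off from the interplay between the global term of Theorem~\ref{ThmEuCh2} — which, unwound through Poitou--Tate duality over the function field $F$, carries local contributions at every prime of $S$ — and the finite-level local Kummer sequence $0\to\mathcal{E}(F_v)/\l^n\to H^1(F_v,\mathcal{E}[\l^n])\to H^1(F_v,\mathcal{E})[\l^n]\to0$, together with the structure of $\mathcal{E}(F_v)$ and of the $I_v$-module $\mathcal{E}[\l^\infty]$; the outcome is governed by the characteristic polynomial of geometric Frobenius on $(V_\l\mathcal{E})^{I_v}$, i.e.\ by $L_v(\mathcal{E},1)$. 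Treating the reduction types in turn: at a prime of good reduction $L_v(\mathcal{E},1)=\#\widetilde{\mathcal{E}}(k_v)$ up to a unit prime to $\l$, giving the factor $|L_v(\mathcal{E},1)|_\l$; at a prime of split multiplicative reduction Tate's uniformization $\mathcal{E}\cong\mathbb{G}_m/q_{\mathcal{E}}^{\Z}$ identifies $(V_\l\mathcal{E})^{I_v}$ with the $\mathbb{G}_m$-line and forces the Tamagawa number $c_v=\ord_v(q_{\mathcal{E}})$ into the answer, giving $|L_v(\mathcal{E},1)/c_v|_\l$; and at the remaining primes — additive, where $(V_\l\mathcal{E})^{I_v}=0$ and $L_v(\mathcal{E},1)=1$, and non-split multiplicative — one checks, using that $\l\geq5$ makes $\l\nmid c_v$, that the factor is trivial and no term appears. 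The delicate points, which I would write out in full, are the precise passage from the cohomological expression of Theorem~\ref{ThmEuCh2} to these $\l$-adic valuations — a careful analysis of $\mathcal{E}(F_v)$, of the component group, and of $\mathcal{E}[\l^\infty]^{I_v}$ at the bad primes, where Tate curves intervene — and the global book-keeping that matches, across all $v\in S$, the local factors produced this way with the good-reduction and split-multiplicative products displayed in \eqref{EqCharE}.
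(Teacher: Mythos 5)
Your overall route --- specialize Theorem \ref{ThmEuCh2} to $A=\mathcal{E}$, use self-duality to collapse $|\mathcal{E}(F)[\l^\infty]||\mathcal{E}^t(F)[\l^\infty]|$ to $|\mathcal{E}(F)[\l^\infty]|^2$, keep the $\cd_\l(K_w)=1$ factors as they stand, and convert the $\cd_\l(K_w)=0$ factors into local $L$-factors --- is exactly the paper's. However, the one step that carries all the new content of this theorem, namely the evaluation of the $\cd_\l(K_w)=0$ contribution, is precisely the step you flag as ``the main obstacle'' and do not carry out, and the mechanism you propose for it is off-track. In Theorem \ref{ThmEuCh2} that contribution is already the purely local quantity $|H^1(F_v,A[\l^\infty])|$; there is no need to ``unwind the global term through Poitou--Tate'' or to pass through the Frobenius action on $(V_\l\mathcal{E})^{I_v}$. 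The paper's Proposition \ref{EulFac} computes it directly: local Tate duality gives $H^1(F_v,\mathcal{E}[\l^\infty])^\vee\simeq\mathcal{E}(F_v)^*=\varprojlim_n\mathcal{E}(F_v)/\l^n$, and the filtration $\mathcal{E}_1(F_v)\subset\mathcal{E}_0(F_v)\subset\mathcal{E}(F_v)$ (with $\mathcal{E}_1(F_v)$ having no $\l$-torsion and $\mathcal{E}_0(F_v)/\mathcal{E}_1(F_v)\simeq\wt{\mathcal{E}}_{v,ns}(\F_v)$ of order $|\F_v|L_v(\mathcal{E},1)^{-1}$) shows that $|H^1(F_v,\mathcal{E}[\l^\infty])|$ is the $\l$-part of $c_v|\wt{\mathcal{E}}_{v,ns}(\F_v)|$, i.e.\ $\left|L_v(\mathcal{E},1)/c_v\right|_\l$. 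With $c_v=1$ at good primes and $\l\geq 5$ killing the additive contribution (there $c_v\leq 4$ and $\wt{\mathcal{E}}_{v,ns}(\F_v)$ is a $p$-group), the displayed products follow. Until you supply an argument of this kind, the proof is incomplete.

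Two smaller points. First, your claim that non-split multiplicative primes contribute trivially is false: there $|\wt{\mathcal{E}}_{v,ns}(\F_v)|=|\F_v|+1$, which may well be divisible by $\l$. The paper avoids this case by the standing convention of Section \ref{EllCur} that $F$ has been replaced by a finite extension over which all multiplicative reduction is split; you should invoke that convention rather than a triviality claim. Second, the absence of $\l$-torsion in $G$ is a running hypothesis of the paper, not a consequence of $\l\geq 5$, and the primes with $\cd_\l(K_w)=2$ are already absent from Theorem \ref{ThmEuCh2} (for $G_v$ trivial the map $\gamma_v$ is an isomorphism in Proposition \ref{Gamma}), so no local Euler--Poincar\'e argument is needed for them.
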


\subsection{Setting and notations}\label{SettNot}
Before moving on we recall the main objects and the setting we shall work with.

Let $F$ be a global function field of characteristic $p>0$ with finite constant field $\F_q$ ($q=p^r$ for 
some $r\in\N$).
For any place $v$ of $F$, let $F_v$ be
the completion of $F$ at $v$ and $\F_v$ its residue field. 
For any Galois extension $L/F$ and any place $v$ of $F$, fix a place $w$ of $L$ lying above $v$ and let $G_v:=\Gal(L_w/F_v)$ be the associated
decomposition group.
We also fix an embedding
of $\ov{F}$ (a separable algebraic closure of $F$) into $\ov{F_v}$ in order to get a restriction map 
$G_{F_v}:=\Gal(\ov{F_v}/F_v) \iri
\Gal(\ov{F}/F)=:G_F\,$.\\
Let $K/F$ be an $\l$-adic Lie extension unramified outside a finite and nonempty set of places 
$S$ of $F$.
We write $G:=\Gal(K/F)$ and assume it has finite dimension $d$ (as $\l$-adic Lie group) and no elements of order $\l$
(then the $\l$-cohomological dimension of $G$  is $\cd_\l(G)=d$ by (\cite[Corollaire (1) p. 413]{Se2}).\\
Let $F_S$ be the maximal (separable) extension of $F$ unramified outside $S$,
so that $K\subseteq F_S\,$. Recall that 
$\cd_\l(\Gal(F_S/F))=2$ (\cite[Corollary 10.1.3 (iii)]{NSW}).

Let $A/F$ be an abelian variety of dimension $g$. We denote by $A^t$ its dual
abelian variety and, as usual, $A[\l^n]$ will be the scheme of $\l^n$-torsion points of $A$, with 
$A[\l^\infty]:=\displaystyle{\dl{n} A[\l^n]}=\bigcup A[\l^n]$. 

For any $\l$-adic Lie group $G$ we denote by
\[ \L(G) = \Z_\l[[G]] := \il{U} \Z_\l [G/U] \]
the associated {\em Iwasawa algebra}, where the limit is taken on the open
normal subgroups of $G$.\\
It is well known that in our setting $\L(G)$ is a Noetherian and
(if $G$ is pro-$\l$ and has no elements of order $\l$) integral domain. 

\noindent Let $H$ be a closed subgroup of $G$. For every $\L(H)$-module
$N$ we consider the $\L(G)$-modules
\[ \Coind^H_G(N):={\rm Map}_{\L(H)}(\L(G), N)\quad {\rm and}\quad
\Ind_H^G(N):=\L(G)\otimes_{\L(H)} N\ . \footnote{Some texts, e.g. \cite{NSW}, switch the definitions of $\Ind^H_G(N)$ and $\Coind^H_G(N)$.} \]

\noindent For a $\L(G)$-module $M$, we denote by $M^{\vee}:=\Hom_{cont}
(M,\Q_\l/\Z_\l)$ its Pon\-trja\-gin dual, which has a natural structure of $\L(G)$-module.

We enlarge our set $S$ so that it contains all primes ramified in $K/F$ and all places of bad reduction for $A$.
Then, the extension $F(A[\l^\infty])/F$ is contained in $F_S/F$ and $A[\l^\infty]$ is an unramified
$G_{F_v}$-module for every $v\notin S$. 
For any Galois extension $L/F$ such that $L\subseteq F_S$, let us consider the map
\begin{align*}
\rho: H^1(F^s/L,A[\l^\infty]) \to \prod_{v\notin S} \Coind^{G_v}_{G} H^1(F_v^s/L_w,A[\l^\infty])\ .
\end{align*} Direct computations on local Galois cohomology groups  give 
(for more details see \cite[Proposition 1.4.4]{V}) $Ker(\rho)\simeq H^1(F_S/L,A[\l^\infty])$ .\\
Let us consider the map:
\[ \eta: H^1(F^s/L,A[\l^\infty]) \to \displaystyle{
\prod_{v\in S} \Coind^{G_v}_G H^1(F_v^s/L_w, A[\l^\infty])}\,.\]
\noindent Thanks to the fact that for $\l\neq p$ the image of the Kummer maps is trivial
(see \cite[Proposition 3.3]{BL}), we have 
\[ Sel_A(L)_\l= Ker(\rho)\cap Ker(\eta) \simeq H^1(F_S/L,A[\l^\infty]) \cap Ker(\eta)\,.\]
Then, we can use the following definition (already employed in \cite{BV2}) for the
Selmer group.

\begin{defn}\label{DefSelAlt} {\em For any finite extension $L$ of $F$, the $\l$-part of
the} Selmer group {\em of $A$ over $L$ is
\[Sel_A(L)_\l = Ker \left\{ H^1(F_S/L, A[\l^\infty]) \to
\prod_{v\in S} \Coind^{G_v}_G H^1(L_w, A[\l^\infty]) \right\}  \]
where $w$ is a fixed place of $L$ lying above $v$.\\
The {\em Tate-Shafarevich group} $\ts(A/L)$
is the group that fits into the exact sequence
\[ A(L)\otimes \Q_\l / \Z_\l \iri Sel_A(L)_\l \sri \ts(A/L)[\l^\infty]\ .\]}
\end{defn}

\noindent For infinite extensions we define the Selmer groups by
taking direct limits on the finite subextensions.
In particular, $Sel_A(K)_\l$ is a $\L(G)$-module whose structure has been studied in \cite{BV1}.\\
If $L/F$ is a finite extension the group $Sel_A(L)_\l$ is a cofinitely generated $\Z_\l$-module
(see, e.g. \cite[III.8 and III.9]{Mi1}). 
Whenever we assume that $Sel_A(L)_\l$ is finite, we have that the $\Z$-rank of
$A(L)$ is 0, hence
\begin{equation}\label{EqSha}
A(L)\otimes \Q_\l/\Z_\l = 0\quad \textrm{and}\quad |Sel_A(L)_\l|= |\ts(A/L)[\l^\infty]|\ .
\end{equation}

\section{Euler Characteristic for abelian varieties}\label{AbVarl}
\subsection{Cohomological lemmas}\label{CohoLeml}
We list some useful results on the cohomology of the $\l$-power torsion points.

\begin{lem}\label{PropH2Zero}
Let $L$ be a finite extension of $F$ contained in $F_S\,$. If $Sel_{A^t}(L)_\l$ is finite,
then \[H^2(F_S/L, A[\l^\infty])=0\ .\]
\end{lem}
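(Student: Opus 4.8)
The plan is to use Poitou–Tate duality for the $S$-cohomology over the global function field $L$ to identify $H^2(F_S/L, A[\l^\infty])$ with a dual Selmer-type group built from the dual abelian variety $A^t$, and then show that group is finite — hence zero once we note it is also divisible.

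\textbf{Step 1: Reduce to $\l^n$-torsion.} First I would work with $A[\l^n]$ in place of $A[\l^\infty]$. Since $A[\l^\infty] = \varinjlim_n A[\l^n]$ and Galois cohomology commutes with direct limits, it suffices to control $H^2(F_S/L, A[\l^n])$ uniformly: concretely, I want to show that the natural maps $H^2(F_S/L, A[\l^n]) \to H^2(F_S/L, A[\l^{n+1}])$ are eventually zero, or equivalently that the inverse system of Pontrjagin duals is eventually zero. This is where duality enters.

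\textbf{Step 2: Apply Poitou–Tate.} The module $A[\l^n]$ is finite of order prime to $p = \mathrm{char}(F)$, so over the global function field $L$ the nine-term Poitou–Tate exact sequence applies (with $S$ containing all ramified primes and primes of bad reduction, so that $A[\l^n]$ is an $F_S/L$-module). The Cartier/Weil pairing identifies the Tate dual of $A[\l^n]$ with $A^t[\l^n]$. The tail of the Poitou–Tate sequence gives
\[ H^2(F_S/L, A[\l^n]) \longrightarrow \bigoplus_{v \in S} H^2(L_w, A[\l^n]) \longrightarrow H^0(F_S/L, A^t[\l^n])^\vee \longrightarrow 0, \]
and, more usefully, it identifies $H^2(F_S/L, A[\l^n])$ up to a piece controlled by local $H^2$'s with the Pontrjagin dual of a Selmer-type group $\mathrm{Sel}'(A^t[\l^n])$ sitting between $H^1(F_S/L, A^t[\l^n])$ and $\bigoplus_v H^1(L_w, A^t[\l^n])$. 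Passing to the limit over $n$ and using that the local $H^2(L_w, A[\l^\infty])$ vanish (local duality: $H^2(L_w, A[\l^\infty]) \cong (T_\l A^t(L_w))^\vee$-type terms, which vanish since $A^t(L_w)$ has no $\l$-divisible part, $\l \ne p$), I get $H^2(F_S/L, A[\l^\infty])^\vee \cong \varprojlim_n \mathrm{Sel}'(A^t[\l^n])$, a submodule of $\varprojlim_n Sel_{A^t}(L)_\l[\l^n]$-type object, hence a finitely generated $\Z_\l$-module whose corank is bounded by the corank of $Sel_{A^t}(L)_\l$.

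\textbf{Step 3: Conclude via finiteness and divisibility.} By hypothesis $Sel_{A^t}(L)_\l$ is finite, so by \eqref{EqSha} the corank of $Sel_{A^t}(L)_\l$ is $0$ and $A^t(L) \otimes \Q_\l/\Z_\l = 0$; this forces the inverse limit in Step 2 to be a finite group, i.e. $H^2(F_S/L, A[\l^\infty])$ is finite. On the other hand $H^2(F_S/L, A[\l^\infty])$ is $\l$-divisible: this follows because $A[\l^\infty]$ is $\l$-divisible with $H^3(F_S/L, -)$ vanishing on account of $\cd_\l(\Gal(F_S/L)) = 2$, so the long exact sequence from $0 \to A[\l] \to A[\l^\infty] \xrightarrow{\l} A[\l^\infty] \to 0$ shows multiplication by $\l$ is surjective on $H^2(F_S/L, A[\l^\infty])$. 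A finite divisible group is trivial, so $H^2(F_S/L, A[\l^\infty]) = 0$.

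\textbf{Main obstacle.} The delicate point is Step 2: correctly bookkeeping the Poitou–Tate sequence and the local terms so that the relevant dual group is genuinely sandwiched inside something governed by $Sel_{A^t}(L)_\l$, rather than by a larger unramified-outside-$S$ group. In particular one must check that the local conditions defining $Sel_{A^t}$ (trivial image of Kummer maps when $\l \ne p$, as recalled in the excerpt via \cite{BL}) match up with the orthogonal complement appearing in duality, and that the $\varprojlim$ does not introduce a spurious free $\Z_\l$-part. The divisibility argument in Step 3 is routine given $\cd_\l = 2$, and the reduction in Step 1 is formal.
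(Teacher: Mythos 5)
Your proof is correct and follows essentially the same route as the argument the paper delegates to \cite[Proposition 4.4]{BV2}: the (Cassels--)Poitou--Tate sequence identifies $H^2(F_S/L,A[\l^\infty])$, after the local $H^2$'s are killed by local duality and $\l\neq p$, with the Pontrjagin dual of a compact Selmer group for $A^t$, which is finite under the hypothesis; finiteness together with $\l$-divisibility of $H^2$ (coming from $\cd_\l(\Gal(F_S/L))=2$) then forces the vanishing. No substantive gaps.
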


\begin{proof}
See (the proof of) \cite[Proposition 4.4]{BV2}.
\end{proof}

\begin{lem}\label{IsoGloAndLoc}
If $Sel_{A^t}(F)_\l$ is finite and $H^2(F_S/K,A[\l^\infty])=0$ we have
\begin{itemize}
\item[{\bf 1.}] $H^i(G,H^1(F_S/K,A[\l^\infty]))\simeq H^{i+2}(G, A(K)[\l^\infty])=0$ $\forall\ i\geq 1$.
\end{itemize}
Moreover, let $w$ be any prime of $K$ such that $w|v\in S$. Then
\begin{itemize}
\item[{\bf 2.}] $H^i(G_v,H^1(K_w,A[\l^\infty]))\simeq H^{i+2}(G_v, A(K_w)[\l^\infty])=0$ $\forall\ i\geq 1$.
\end{itemize}
\end{lem}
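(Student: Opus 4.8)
The plan is to compare the Galois cohomology of $A[\l^\infty]$ over $F$ with that over $K$, and over $F_v$ with that over $K_w$, by means of the Hochschild--Serre spectral sequence, which in our situation reduces to two nonzero rows; the desired isomorphisms will drop out of the resulting long exact sequence, and the finiteness hypothesis enters through Lemma~\ref{PropH2Zero}.

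For statement~\textbf{1} I would put $\M=\Gal(F_S/F)$, $\calf=\Gal(F_S/K)$, so that $G=\M/\calf$, and consider
\[ E_2^{p,q}=H^p\bigl(G,H^q(F_S/K,A[\l^\infty])\bigr)\ \Longrightarrow\ H^{p+q}(F_S/F,A[\l^\infty]). \]
Since $\calf$ is closed in $\M$ and $\cd_\l(\M)=2$, we have $\cd_\l(\calf)\leq 2$, so the hypothesis $H^2(F_S/K,A[\l^\infty])=0$ forces $H^q(F_S/K,A[\l^\infty])=0$ for every $q\geq 2$; the spectral sequence thus lives in the rows $q=0,1$, with $E_2^{p,0}=H^p(G,A(K)[\l^\infty])$ and $E_2^{p,1}=H^p(G,H^1(F_S/K,A[\l^\infty]))$. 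For such a two-row spectral sequence $E_3=E_\infty$ and the only nonzero differential is $d_2\colon E_2^{p-2,1}\to E_2^{p,0}$, which gives the exact sequence
\[ \cdots\to H^{i+1}(F_S/F,A[\l^\infty])\to H^i\bigl(G,H^1(F_S/K,A[\l^\infty])\bigr)\xrightarrow{\ d_2\ }H^{i+2}\bigl(G,A(K)[\l^\infty]\bigr)\to H^{i+2}(F_S/F,A[\l^\infty])\to\cdots . \]
Now $H^n(F_S/F,A[\l^\infty])=0$ for all $n\geq 2$: the case $n=2$ is Lemma~\ref{PropH2Zero} (applicable since $Sel_{A^t}(F)_\l$ is finite), and the case $n\geq 3$ follows from $\cd_\l(\M)=2$. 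Hence for each $i\geq 1$ both outer terms vanish and $d_2$ becomes the asserted isomorphism $H^i(G,H^1(F_S/K,A[\l^\infty]))\xrightarrow{\sim}H^{i+2}(G,A(K)[\l^\infty])$. The vanishing of this common group is equivalent to saying that the $\L(G)$-module $H^1(F_S/K,A[\l^\infty])$ has no higher $G$-cohomology; I would deduce this from the structure of that module under the assumption $H^2(F_S/K,A[\l^\infty])=0$ (cf.\ the study of these Iwasawa modules in \cite{BV1}, \cite{V}), and then read off $H^{i+2}(G,A(K)[\l^\infty])=0$ for $i\geq 1$ from the isomorphism.

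Statement~\textbf{2} I would prove by running exactly the same argument for $1\to\Gal(\ov{F_v}/K_w)\to G_{F_v}\to G_v\to 1$ with coefficients $A[\l^\infty]$. From $\cd_\l(G_{F_v})=2$ we get $\cd_\l(\Gal(\ov{F_v}/K_w))\leq 2$, while $H^2(K_w,A[\l^\infty])=0$ in every case --- if $K_w$ has infinite residue field then already $\cd_\l(K_w)\leq 1$, and otherwise local Tate duality identifies $H^2(K_w,A[\l^\infty])$ with the Pontrjagin dual of $T_\l A^t(K_w)$, which vanishes since $A^t(K_w)[\l^\infty]$ is then finite --- so once more only the rows $q=0,1$ occur; local Tate duality and $\cd_\l(G_{F_v})=2$ kill $H^n(F_v,A[\l^\infty])$ for $n\geq 2$, and the two-row argument yields $H^i(G_v,H^1(K_w,A[\l^\infty]))\simeq H^{i+2}(G_v,A(K_w)[\l^\infty])$ for $i\geq 1$. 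Here the vanishing is immediate: the decomposition group $G_v$ is a quotient of $G_{F_v}$, whose maximal $\l$-adic Lie quotient is at most $2$-dimensional, so $\cd_\l(G_v)=\dim G_v\leq 2$ (and $=0$ when $G_v$ is finite, $G$ having no element of order $\l$); thus $H^{i+2}(G_v,A(K_w)[\l^\infty])=0$ for $i\geq 1$.

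The formal part --- collapsing Hochschild--Serre to two rows and extracting the $d_2$-isomorphisms --- is routine. The substantive ingredients are: Lemma~\ref{PropH2Zero}, hence the finiteness of $Sel_{A^t}(F)_\l$, used to kill $H^2(F_S/F,A[\l^\infty])$; the local-duality and cohomological-dimension bounds that put the local situation into two-row form; and, as the main obstacle, the structural input on the $\L(G)$-module $H^1(F_S/K,A[\l^\infty])$ needed to conclude the vanishing in statement~\textbf{1}.
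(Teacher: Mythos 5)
You set up exactly the same machinery as the paper: the Hochschild--Serre spectral sequence for the closed normal subgroup $\Gal(F_S/K)$ of $\Gal(F_S/F)$, collapsed to the rows $q=0,1$ (using $\cd_\l(\Gal(F_S/K))\leq 2$ and the hypothesis $H^2(F_S/K,A[\l^\infty])=0$), the vanishing of the abutment in degrees $\geq 2$ via Lemma \ref{PropH2Zero} and $\cd_\l(\Gal(F_S/F))=2$, and the resulting $d_2$-isomorphisms $H^i(G,H^1(F_S/K,A[\l^\infty]))\simeq H^{i+2}(G,A(K)[\l^\infty])$ for $i\geq 1$; your local discussion (the two cases giving $H^2(K_w,A[\l^\infty])=0$, and the bound $\cd_\l(G_v)\leq 2$) is likewise in order. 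The genuine gap is the one you yourself flag as ``the main obstacle'': the assertion that these groups \emph{vanish} in statement \textbf{1}. You propose to extract this from unspecified structural properties of the $\L(G)$-module $H^1(F_S/K,A[\l^\infty])$ taken from \cite{BV1} or \cite{V}, but you never identify such a property, and the vanishing cannot come from cohomological dimension alone, since $d=\cd_\l(G)$ is in general larger than $2$ (e.g.\ for $K=F(A[\l^\infty])$). As it stands, half of statement \textbf{1} is unproved.

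The paper obtains the vanishing from the same spectral sequence, with no input from the Iwasawa-module structure: since only the rows $q=0,1$ survive, \cite[Lemma 2.1.4]{NSW} yields, for $i\geq 1$, an identification
\[ H^i(G,H^1(F_S/K,A[\l^\infty]))=E_2^{i,1}\simeq E^{i+1}=H^{i+1}(F_S/F,A[\l^\infty])\ , \]
and the right-hand side is zero for $i\geq 1$ because $\cd_\l(\Gal(F_S/F))=2$ and $H^2(F_S/F,A[\l^\infty])=0$ (Lemma \ref{PropH2Zero} again). The vanishing of $H^{i+2}(G,A(K)[\l^\infty])$ is then read off from the isomorphism you already established, rather than the other way around. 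So the missing idea is to exploit the degeneration of the $E_2$-page a second time, identifying the row $q=1$ with the (vanishing) abutment; this is the step you should substitute for the appeal to \cite{BV1} and \cite{V}. (Your alternative route to the local vanishing via $\cd_\l(G_v)\leq 2$ is correct, but, as you in effect observe, it has no global analogue.)
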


\begin{proof}{\bf 1.}
By \cite[Corollary 10.1.3 (iii) and Proposition 3.3.5]{NSW} we have that $\cd_\l(\Gal(F_S/K))\leqslant 2$.
Therefore, our hypothesis on the cohomology group $H^2(F_S/K,A[\l^\infty])$ yields
\[ H^i(F_S/K, A[\l^\infty])=0\quad \forall\ i\geq 2 \ .\]
So, from the Hochschild-Serre spectral sequence, we have
\[\begin{xy}
(-34,0)*+{H^1(G,A(K)[\l^\infty])}="v1";(0,0)*+{H^1(F_S/F,A[\l^\infty])}="v2"; (42,0)*+{H^0(G,H^1(F_S/K,A[\l^\infty]))\to}="v3";
(-34,-7)*+{H^2(G,A(K)[\l^\infty])}="v4";(0,-7)*+{H^2(F_S/F,A[\l^\infty])}="v5"; (42,-7)*+{H^1(G,H^1(F_S/K,A[\l^\infty]))\to}="v6";
(-36,-12)*+{\dots};(0,-12)*+{\dots};(44,-12)*+{\dots};
(-34,-17)*+{H^i(G,A(K)[\l^\infty])}="v7";(-1,-17)*+{H^i(F_S/F,A[\l^\infty])}="v8"; (43,-17)*+{H^{i-1}(G,H^1(F_S/K,A[\l^\infty]))\!\!\to\!\!\dots}="v9";
{\ar@{^{(}->} "v1";"v2"};{\ar@{->} "v2";"v3"};{\ar@{->} "v4";"v5"};{\ar@{->} "v5";"v6"};
{\ar@{->} "v7";"v8"};{\ar@{->} "v8";"v9"};
\end{xy}\](see \cite[Lemma 2.1.3]{NSW}).
Since $\cd_\l(\Gal(F_S/F))=2$ and, by Lemma \ref{PropH2Zero}, the group $H^2(F_S/F, A[\l^\infty])$ is zero, one gets
\[ H^i(G,H^1(F_S/K,A[\l^\infty]))\simeq H^{i+2}(G, A(K)[\l^\infty])\quad \forall\ i\geq 1\ .\]
Moreover, by \cite[Lemma 2.1.4]{NSW}, we have the following isomorphisms
\[ H^i(G,H^1(F_S/K,A[\l^\infty]))=E^{i1}_2\simeq E^{i+1} = H^{i+1}(F_S/F,A[\l^\infty]) \quad \forall\ i\geq 1\ . \]
Then
\[ H^i(G,H^1(F_S/K,A[\l^\infty]))=0\quad \forall\ i\geq 1\ .  \]
{\bf 2.} The argument is the same of part {\bf 1}. In order to get $H^2(K_w,A[\l^\infty])=0$, 
just use \cite[Theorem 7.2.6]{NSW} or \cite[Theorem 7.1.8 (i)]{NSW} according to $K_w$ being a local
field or not (so according to $K/F$ being a finite extension or not). 
\end{proof}

\subsection{Euler characteristic I: Selmer groups and torsion points}
Now we give a first formula for the Euler characteristic of $Sel_A(K)_\l$ which relates it to
the (local and global) Euler characteristics of the $\l^\infty$-torsion points in $K$ and $K_w$.

\begin{thm}\label{ThmEuCh}
With notations as above, assume that $Sel_{A}(F)_\l$ and $Sel_{A^t}(F)_\l$ are finite, 
$H^2(F_S/K,A[\l^\infty])=0$, $\chi(G,A(K)[\l^\infty])$ and $\chi(G_v,A(K_w)[\l^\infty])$ are well defined for any $w|v\in S$
and the map $\psi_K$ in the sequence
\begin{equation}\label{EqSel} Sel_A(K)_\l \iri H^1(F_S/K,A[\l^\infty]) \stackrel{\psi_K}{-\!\!-\!\!\!\longrightarrow}
\prod_{v\in S} \Coind_G^{G_v} H^1(K_w,A[\l^\infty]) \end{equation}
is surjective.
Then $H^i(G,Sel_A(K)_\l)=0$ for any $i\geqslant 2$, the Euler cha\-rac\-te\-ri\-stic of $Sel_{A}(K)_\l$ is well defined and
\begin{equation}\label{EqChi}
\begin{array}{ll} \chi(G,Sel_A(K)_\l) = & \displaystyle{ \chi(G,A(K)[\l^\infty])
\frac{|H^1(F_S/F,A[\l^\infty])|}{|A(F)[\l^\infty]|}} \cdot \\
\ & \cdot \displaystyle{ \left(\prod_{v\in S} \chi(G_v,A(K_w)[\l^\infty]) 
\frac{|H^1(F_v,A[\l^\infty])|}{|A(F_v)[\l^\infty]|} \right)^{-1} } \end{array} 
\end{equation}
where, for every $v\in S$, $w$ is a fixed place of $K$ dividing $v$.
\end{thm}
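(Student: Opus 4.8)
The key is the short exact sequence of $G$-modules provided by the surjectivity of $\psi_K$,
\[ 0\to Sel_A(K)_\l \to H^1(F_S/K,A[\l^\infty]) \stackrel{\psi_K}{\longrightarrow} \prod_{v\in S}\Coind_G^{G_v} H^1(K_w,A[\l^\infty]) \to 0, \]
into which I would plug the long exact sequence of $G$-cohomology. By Shapiro's lemma $H^i(G,\Coind_G^{G_v}N)\simeq H^i(G_v,N)$, so part {\bf 2} of Lemma \ref{IsoGloAndLoc} (together with the fact that finite products commute with cohomology) gives $H^i\bigl(G,\prod_{v\in S}\Coind_G^{G_v}H^1(K_w,A[\l^\infty])\bigr)=0$ for all $i\geq 1$, while part {\bf 1} gives $H^i(G,H^1(F_S/K,A[\l^\infty]))=0$ for all $i\geq 1$. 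Hence, in every degree $i\geq 2$ the cohomology of $Sel_A(K)_\l$ is squeezed between two vanishing terms, so $H^i(G,Sel_A(K)_\l)=0$ for $i\geq 2$, and what survives of the long exact sequence is
\[ 0\to H^0(G,Sel_A(K)_\l)\to H^0(G,H^1(F_S/K,A[\l^\infty])) \to \prod_{v\in S} H^0(G_v,H^1(K_w,A[\l^\infty])) \to H^1(G,Sel_A(K)_\l)\to 0. \]
Granting (see the last paragraph) that all these groups are finite, an order count in this exact sequence yields
\[ \chi(G,Sel_A(K)_\l) = \frac{|H^0(G,Sel_A(K)_\l)|}{|H^1(G,Sel_A(K)_\l)|} = \frac{|H^0(G,H^1(F_S/K,A[\l^\infty]))|}{\prod_{v\in S}|H^0(G_v,H^1(K_w,A[\l^\infty]))|}, \]
so it only remains to identify the two kinds of invariants appearing here.

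For the numerator I would run the Hochschild--Serre spectral sequence of $1\to\Gal(F_S/K)\to\Gal(F_S/F)\to G\to 1$ with coefficients $A[\l^\infty]$. Since $\cd_\l(\Gal(F_S/K))\leq 2$ and $H^2(F_S/K,A[\l^\infty])=0$ by hypothesis, the rows $q\geq 2$ vanish, so the spectral sequence degenerates into a long exact sequence linking the $H^n(F_S/F,A[\l^\infty])$, the $H^p(G,A(K)[\l^\infty])$ and the $H^p(G,H^1(F_S/K,A[\l^\infty]))$. Feeding in $H^2(F_S/F,A[\l^\infty])=0$ (Lemma \ref{PropH2Zero}, where the hypothesis that $Sel_{A^t}(F)_\l$ is finite is used), $H^0(G,A(K)[\l^\infty])=A(F)[\l^\infty]$, and part {\bf 1} of Lemma \ref{IsoGloAndLoc}, this long exact sequence gives on one side $H^j(G,A(K)[\l^\infty])=0$ for all $j\geq 3$ --- so that $\chi(G,A(K)[\l^\infty]) = |A(F)[\l^\infty]|\cdot|H^2(G,A(K)[\l^\infty])|/|H^1(G,A(K)[\l^\infty])|$ --- and on the other side the four-term exact sequence
\[ 0\to H^1(G,A(K)[\l^\infty])\to H^1(F_S/F,A[\l^\infty])\to H^0(G,H^1(F_S/K,A[\l^\infty]))\to H^2(G,A(K)[\l^\infty])\to 0. \]
Combining the two relations one gets $|H^0(G,H^1(F_S/K,A[\l^\infty]))| = \chi(G,A(K)[\l^\infty])\cdot|H^1(F_S/F,A[\l^\infty])|/|A(F)[\l^\infty]|$.

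The identical computation carried out locally at each $v\in S$ --- Hochschild--Serre for $1\to G_{K_w}\to G_{F_v}\to G_v\to 1$ with coefficients $A[\l^\infty]$, using $H^2(K_w,A[\l^\infty])=0$ (\cite[Theorem 7.2.6]{NSW} or \cite[Theorem 7.1.8 (i)]{NSW}, depending on whether $K_w$ is a local field), $H^2(F_v,A[\l^\infty])=0$ (local duality, valid because $\l\neq p$ forces $A^t(F_v)[\l^\infty]$ to be finite), and part {\bf 2} of Lemma \ref{IsoGloAndLoc} --- produces $|H^0(G_v,H^1(K_w,A[\l^\infty]))| = \chi(G_v,A(K_w)[\l^\infty])\cdot|H^1(F_v,A[\l^\infty])|/|A(F_v)[\l^\infty]|$. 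Substituting these two identities into the formula displayed in the first step yields exactly \eqref{EqChi}.

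The step that I expect to be the main nuisance is not the diagram chasing but the finiteness bookkeeping that makes every Euler characteristic and every quotient above meaningful. The groups $H^i(G,A(K)[\l^\infty])$ and $H^i(G_v,A(K_w)[\l^\infty])$ are finite by assumption; $H^1(F_v,A[\l^\infty])$ is finite by the local Euler--Poincar\'e characteristic formula (this is where $\l\neq p$ enters once more, $A^t(F_v)[\l^\infty]$ being finite); and $H^1(F_S/F,A[\l^\infty])$ is finite because $Sel_A(F)_\l$ is finite and $H^1(F_S/F,A[\l^\infty])/Sel_A(F)_\l$ embeds into $\prod_{v\in S}H^1(F_v,A[\l^\infty])$. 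Threading these facts through the exact sequences of the three steps above then shows in turn that $H^0(G,H^1(F_S/K,A[\l^\infty]))$, the local $H^0(G_v,H^1(K_w,A[\l^\infty]))$, and finally $H^0(G,Sel_A(K)_\l)$ and $H^1(G,Sel_A(K)_\l)$ are all finite, so that $\chi(G,Sel_A(K)_\l)$ is indeed well defined; the genuinely substantive inputs ($H^2(F_S/F,A[\l^\infty])=0$ and the isomorphisms of Lemma \ref{IsoGloAndLoc}) are already at hand.
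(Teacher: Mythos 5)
Your proposal is correct and follows essentially the same route as the paper: take $G$-cohomology of the short exact sequence coming from the surjectivity of $\psi_K$, use Shapiro's Lemma and Lemma \ref{IsoGloAndLoc} to reduce $\chi(G,Sel_A(K)_\l)$ to the quotient $|H^1(F_S/K,A[\l^\infty])^G|/\prod_{v\in S}|H^1(K_w,A[\l^\infty])^{G_v}|$, and then evaluate numerator and denominator via the global and local Hochschild--Serre sequences. Your closing paragraph on finiteness bookkeeping is a welcome addition that the paper leaves implicit.
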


\begin{proof}
Let us consider the sequence \eqref{EqSel} and 
take its cohomology with respect to $G$. Then, using Shapiro's Lemma (\cite[Proposition 1.6.4]{NSW}), Lemma  
 \ref{IsoGloAndLoc} and the finiteness of the cohomological dimension of $G$, we obtain
\begin{equation}\label{fiveterm}
\begin{xy}
(-30,0)*+{Sel_A(K)_\l^G}="v1";(6,0)*+{H^1(F_S/K,A[\l^\infty])^G}="v2";(6,-15)*+{\displaystyle{\prod_{v\in S}}
H^1(K_w,A[\l^\infty])^{G_v}}="v3";
(45,-15)*+{H^1(G,Sel_A(K)_\l)}="v4";
{\ar@{^{(}->} "v1";"v2"};{\ar@{->}^<<<<{\psi_K^G} "v2";"v3"};{\ar@{->>} "v3";"v4"};
\end{xy}\end{equation}
and $H^i(G,Sel_A(K)_\l)=0$ for any $i\geqslant 2$.
Therefore
\begin{equation}\label{EqEuCh1} \chi(G,Sel_A(K)_\l)= \frac{|Sel_A(K)_\l^G|}{|H^1(G,Sel_A(K)_\l)|} =
\frac{|H^1(F_S/K,A[\l^\infty])^G|}{\displaystyle{\prod_{v\in S}} |H^1(K_w,A[\l^\infty])^{G_v}|} \ .\end{equation}
The Hochschild-Serre spectral sequence yields
\[ \begin{xy} (-30,0)*+{H^1(G,A(K)[\l^\infty])}="v1";(6,0)*+{H^1(F_S/F,A[\l^\infty])}="v2";(6,-15)*+{H^1(F_S/K,A[\l^\infty])^G}
="v3";
(45,-15)*+{H^2(G,A(K)[\l^\infty])}="v4";
{\ar@{^{(}->} "v1";"v2"}{\ar@{->} "v2";"v3"};{\ar@{->>} "v3";"v4"};
\end{xy}\]
Recalling that, by Lemma \ref{IsoGloAndLoc}, $H^i(G,A(K)[\l^\infty])=0$ for any $i\geqslant 3$, we have
\[ \begin{array}{lcl} |H^1(F_S/K,A[\l^\infty])^G| & = &
\displaystyle{\frac{|H^1(F_S/F,A[\l^\infty])||H^2(G,A(K)[\l^\infty])|}{|H^1(G,A(K)[\l^\infty])|}}\\
\ & = & \displaystyle{\frac{|H^1(F_S/F,A[\l^\infty])|\chi(G,A(K)[\l^\infty])}{|H^0(G,A(K)[\l^\infty])|}} \\
\ & = & \displaystyle{\frac{|H^1(F_S/F,A[\l^\infty])|\chi(G,A(K)[\l^\infty])}{|A(F)[\l^\infty]|}}\ . \end{array} \]
The local computations are similar and, substituting in \eqref{EqEuCh1}, we get \eqref{EqChi}.
\end{proof}

\begin{ex} 
{\em Let us consider $K=F(A[\l^\infty])$. By \cite[Proposition 4.5]{BV2}
we know $H^2(F_S/K, A[\l^\infty])=0$.
Moreover, it is easy to see that $\chi(G,A[\l^\infty])$ is well defined thanks to \cite[Th\'eor\`eme 2]{Se3}.
Besides, if all primes in $S$ are of split multiplicative reduction, then the Mumford parametrization and 
the form of the $\l$-power torsion points yields 
$\cd_\l(K_w)=0$. So, the Hochschild-Serre
spectral sequence provides isomorphisms
\[ H^n(G_v,A(K_w)[\l^\infty])\simeq H^n(F_v,A[\l^\infty])\quad\forall\ n\geq 0 \ . \]
Then \[ \chi(G_v,A[\l^\infty]) = \frac{|A[\l^\infty](F_v)|}{|H^1(F_v, A[\l^\infty])|}\] is well defined
thanks to \cite[Theorem 7.1.8]{NSW} and the fact that Tate local duality
(\cite[Theorem 7.2.6]{NSW}) yields $H^2(F_v,A[\l^\infty])=  0$. 
It follows that if both $Sel_A(F)_\l$ and $Sel_{A^t}(F)_\l$ are finite and the map $\psi_K$ in \eqref{EqSel} 
is surjective
\[ \chi(G,Sel_A(K)_\l)=\chi(G,A[\l^\infty])\frac{|H^1(F_S/F,A[\l^\infty])|}{|A(F)[\l^\infty]|} \ .\] }
\end{ex}

\subsection{Euler Characteristic II: descent diagrams}
Now we look for a slightly different formulation for the global factor, which
is more closely related, especially in the case of elliptic curves, to special values of $L$-functions. 
We consider the classical descent diagram 
\begin{equation}\label{Diag1}
\xymatrix{ Sel_A(F)_\l \ar[d]^\alpha \ar@{^(->}[r] & H^1(F_S/F,A[\l^\infty]) \ar@{->>}[r]^{\qquad\psi_F} \ar[d]^\beta &
Im(\psi_F) \ar[d]^{\beta'} \\
Sel_A(K)_\l^G \ar@{^(->}[r] & H^1(F_S/K,A[\l^\infty])^G \ar@{->>}[r]^{\qquad\psi_K^G} &
Im(\psi_K^G) }
\end{equation}
where $Im(\psi_F)$ and $Im(\psi_K^G)$ lie in the diagram
\begin{equation}\label{Diag2}
\xymatrix { Im(\psi_F) \ar[d]^{\beta'} \ar@{^(->}[r] & \displaystyle{\prod_{v\in S} H^1(F_v,A[\l^\infty]) } \ar@{->>}[r] \ar[d]^{\gamma=\oplus \gamma_v} &
Coker(\psi_F) \ar[d]^{\gamma} \\
Im(\psi_K^G) \ar@{^(->}[r] & \displaystyle{ \prod_{v\in S} \Coind_G^{G_v} H^1(K_w,A[\l^\infty])^G } \ar@{->>}[r] &
Coker(\psi_K^G)  \ .}
\end{equation}

\begin{prop}\label{Gamma}
Assume $Sel_{A^t}(F)_\l$ is finite. Then, 
\[ Ker(\beta)=H^1(G,A(K)[\l^\infty])\ \  \mathrm{and}\ \  Coker(\beta)=H^2(G,A(K)[\l^\infty])\,.\] 
Moreover,
\begin{align*}   & Ker(\g)=\!\!\!\prod_{\begin{subarray}{c} v\in S\\ \cd_\l(K_w)=1\end{subarray}} \!\!\! H^1(G_v,A(K_w)[\l^\infty]) \cdot \!\!\! \prod_{\begin{subarray}{c} v\in S\\ \cd_\l(K_w)=0\end{subarray}} \!\!\!H^1(F_v,A[\l^\infty])\\
 & Coker(\g)=\!\!\!\prod_{\begin{subarray}{c} v\in S\\ \cd_\l(K_w)=1\end{subarray}} \!\!\!H^2(G_v,A(K_w)[\l^\infty])\,. 
\end{align*}
\end{prop}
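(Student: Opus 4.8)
The plan is to recognize both $\beta$ and $\gamma$ as restriction (edge) maps attached to Hochschild--Serre spectral sequences, and to extract their kernels and cokernels from the corresponding five-term exact sequences, the only non-formal ingredients being the vanishing of the relevant degree-$2$ cohomology groups.

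First, $\beta$ is the restriction map $H^1(F_S/F,A[\l^\infty])\to H^1(F_S/K,A[\l^\infty])^G$ of the extension $1\to\Gal(F_S/K)\to\Gal(F_S/F)\to G\to1$ acting on $A[\l^\infty]$, and since $(A[\l^\infty])^{\Gal(F_S/K)}=A(K)[\l^\infty]$ the five-term exact sequence of the Hochschild--Serre spectral sequence reads
\[0\to H^1(G,A(K)[\l^\infty])\to H^1(F_S/F,A[\l^\infty])\stackrel{\beta}{\longrightarrow}H^1(F_S/K,A[\l^\infty])^G\to H^2(G,A(K)[\l^\infty])\to H^2(F_S/F,A[\l^\infty]).\]
Hence $Ker(\beta)=H^1(G,A(K)[\l^\infty])$. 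As $Sel_{A^t}(F)_\l$ is finite, Lemma \ref{PropH2Zero} gives $H^2(F_S/F,A[\l^\infty])=0$, so the rightmost map is zero, the arrow $H^1(F_S/K,A[\l^\infty])^G\to H^2(G,A(K)[\l^\infty])$ is onto, and $Coker(\beta)=H^2(G,A(K)[\l^\infty])$.

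Next, since $\gamma=\oplus_{v\in S}\gamma_v$ is coordinate-wise, $Ker(\gamma)=\prod_{v\in S}Ker(\gamma_v)$ and $Coker(\gamma)=\prod_{v\in S}Coker(\gamma_v)$, so it suffices to treat each $\gamma_v$. By Shapiro's Lemma $(\Coind_G^{G_v}H^1(K_w,A[\l^\infty]))^G\simeq H^1(K_w,A[\l^\infty])^{G_v}$, and under this identification $\gamma_v$ is the restriction map of $1\to G_{K_w}\to G_{F_v}\to G_v\to1$ on $A[\l^\infty]$, with $(A[\l^\infty])^{G_{K_w}}=A(K_w)[\l^\infty]$. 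Exactly as in the proof of Lemma \ref{IsoGloAndLoc}(2), Tate local duality gives $H^2(F_v,A[\l^\infty])=0$, so the five-term sequence collapses to
\[0\to H^1(G_v,A(K_w)[\l^\infty])\to H^1(F_v,A[\l^\infty])\stackrel{\gamma_v}{\longrightarrow}H^1(K_w,A[\l^\infty])^{G_v}\to H^2(G_v,A(K_w)[\l^\infty])\to0,\]
giving $Ker(\gamma_v)=H^1(G_v,A(K_w)[\l^\infty])$ and $Coker(\gamma_v)=H^2(G_v,A(K_w)[\l^\infty])$. When $\cd_\l(K_w)=0$ we have $H^1(K_w,A[\l^\infty])=0$, hence the target of $\gamma_v$ is trivial, $Ker(\gamma_v)=H^1(F_v,A[\l^\infty])$ and $Coker(\gamma_v)=0$ (and, consistently, the displayed sequence then identifies $H^1(G_v,A(K_w)[\l^\infty])$ with $H^1(F_v,A[\l^\infty])$ and forces $H^2(G_v,A(K_w)[\l^\infty])=0$). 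Multiplying over $v\in S$ and splitting the factors according to whether $\cd_\l(K_w)=1$ or $0$ yields the asserted formulas.

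The argument is essentially formal; the two substantive inputs are the vanishings $H^2(F_S/F,A[\l^\infty])=0$, which is precisely where the finiteness of $Sel_{A^t}(F)_\l$ is used, via Lemma \ref{PropH2Zero}, and $H^2(F_v,A[\l^\infty])=0$, which is Tate local duality. The only delicate bookkeeping is the case distinction for $\gamma_v$: at places with $\cd_\l(K_w)=0$ the codomain of $\gamma_v$ degenerates, and this is what converts the uniform answer $H^1(G_v,A(K_w)[\l^\infty])$, $H^2(G_v,A(K_w)[\l^\infty])$ into the asymmetric pair $H^1(F_v,A[\l^\infty])$, $0$ recorded in the statement.
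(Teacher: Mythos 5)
Your proof is correct and follows essentially the same route as the paper: the Hochschild--Serre five-term sequence with Lemma \ref{PropH2Zero} for $\beta$, and Shapiro's Lemma plus the five-term sequence with $H^2(F_v,A[\l^\infty])=0$ for each $\gamma_v$, with the same case split at places where $\cd_\l(K_w)=0$. The only cosmetic difference is that the paper also explicitly notes that totally split places (where $G_v=0$, so $\cd_\l(K_w)=2$) contribute trivially because $\gamma_v$ is then an isomorphism, a case your summation over $\cd_\l(K_w)\in\{0,1\}$ leaves implicit.
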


\begin{proof}
For the map $\beta$ just use the Hochschild-Serre five term exact sequence and Lemma \ref{PropH2Zero}.\\
For the map $\g$, by Shapiro's Lemma we can rewrite every $\g_v$ as
\[ \g_v: H^1(F_v,A[\l^\infty]) \to H^1(K_w, A[\l^\infty])^{G_v} \]
for a fixed place $w$ of $K$ dividing $v$. Using again the five term exact sequence and the fact that
$H^2(F_v, A[\l^\infty])=0$, one has
\[ Ker(\g_v)\simeq H^1(G_v,A(K_w)[\l^\infty]) \quad \textrm{and} \quad
Coker(\g_v) \simeq H^2(G_v,A(K_w)[\l^\infty])\ .\] 
If $v$ is totally split we have that $G_v=0$. In this case $\gamma_v$ is an isomorphism. If $v$ is inert 
or ramified, by \cite[Theorem 7.1.8 (i)]{NSW} $\cd_\l(K_w)\leq 1$. This implies that when $\cd_\l(K_w)=0$, 
$\gamma_v$ is the zero-map and we have  $Ker(\gamma_v)\simeq H^1(F_v,A[\l^\infty])$ and $Coker(\gamma_v)=0$.
The claim follows.
\end{proof}

\begin{thm}\label{ThmEuCh2}
With hypotheses as in Theorem \ref{ThmEuCh} one has
\begin{align*} \chi(G,Sel_A(K)_\l)= & \ \chi(G,A(K)[\l^\infty]) \cdot  \frac{|\ts(A/F)[\l^\infty]|}{|A(F)[\l^\infty]||A^t(F)[\l^\infty]|}\cdot \\
 & \cdot \prod_{\begin{subarray}{c} v\in S\\ \cd_\l(K_w)=1\end{subarray}}\!\!\! \!\!\!\frac{|A(F_v)[\l^\infty]|}{\chi(G_v,A(K_w)[\l^\infty])} \cdot 
 \prod_{\begin{subarray}{c} v\in S\\ \cd_\l(K_w)=0\end{subarray}} \!\!\!\!\!\!|H^1(F_v,A[\l^\infty])|\,. 
\end{align*}
\end{thm}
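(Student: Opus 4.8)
The plan is to start from the Euler characteristic formula of Theorem \ref{ThmEuCh}, namely
\[ \chi(G,Sel_A(K)_\l) = \chi(G,A(K)[\l^\infty])\,\frac{|H^1(F_S/F,A[\l^\infty])|}{|A(F)[\l^\infty]|}\cdot\left(\prod_{v\in S}\chi(G_v,A(K_w)[\l^\infty])\,\frac{|H^1(F_v,A[\l^\infty])|}{|A(F_v)[\l^\infty]|}\right)^{-1}, \]
and to rewrite the ``global'' ratio $|H^1(F_S/F,A[\l^\infty])|/|A(F)[\l^\infty]|$ in terms of the Tate--Shafarevich group by running the descent diagrams \eqref{Diag1} and \eqref{Diag2}. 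First I would apply the snake lemma to \eqref{Diag1}: since $\psi_K$ is surjective (hence so is $\psi_K^G$ after taking $G$-invariants, by the vanishing in Lemma \ref{IsoGloAndLoc}), the bottom row is a short exact sequence, and the top row is short exact by definition of $Sel_A(F)_\l$ as the kernel of $\psi_F$ together with $H^2(F_S/F,A[\l^\infty])=0$ (Lemma \ref{PropH2Zero}) which forces $\psi_F$ surjective onto $\prod_v H^1(F_v,A[\l^\infty])$ modulo the cokernel bookkeeping. This yields an exact hexagon relating $Ker(\alpha),Ker(\beta),Ker(\beta'),Coker(\alpha),Coker(\beta),Coker(\beta')$; similarly \eqref{Diag2} gives a hexagon relating $\beta',\gamma$ and $Coker(\psi_F),Coker(\psi_K^G)$.

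Next I would feed in the computations of Proposition \ref{Gamma}: $Ker(\beta)=H^1(G,A(K)[\l^\infty])$, $Coker(\beta)=H^2(G,A(K)[\l^\infty])$, and the stated product expressions for $Ker(\gamma)$ and $Coker(\gamma)$. The point is that all maps here have finite kernels and cokernels (everything in sight is cofinitely generated over $\Z_\l$ and, under the running hypotheses, the relevant modules are finite), so I can pass to orders and multiply. Taking the alternating product of orders around each hexagon gives
\[ \frac{|Ker(\alpha)|\,|Coker(\beta)|\,|Coker(\beta')|}{|Coker(\alpha)|\,|Ker(\beta)|\,|Ker(\beta')|}=1, \qquad \frac{|Ker(\beta')|\,|Coker(\gamma)|\,|Coker(\gamma'')|}{|Coker(\beta')|\,|Ker(\gamma)|\,|Ker(\gamma'')|}=1, \]
where $\gamma''$ is the induced map on cokernels of $\psi$. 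Combining these two relations eliminates $|Ker(\beta')|$ and $|Coker(\beta')|$, leaving an identity that expresses $|Ker(\alpha)|/|Coker(\alpha)|$ — equivalently the comparison between $Sel_A(F)_\l$ and $Sel_A(K)_\l^G$ — in terms of $|Ker(\beta)|,|Coker(\beta)|,|Ker(\gamma)|,|Coker(\gamma)|$ and the $\gamma''$ terms. The $\gamma''$ contribution should disappear, since $Coker(\psi_F)$ and $Coker(\psi_K^G)$ both vanish (the maps $\psi_F$, $\psi_K^G$ being surjective onto their natural targets, by $H^2=0$ globally and surjectivity of $\psi_K$); I would check this carefully, as it is where the hypothesis that $\psi_K$ is onto really gets used.

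Finally I would translate $|Sel_A(F)_\l|$ into $|\ts(A/F)[\l^\infty]|$ via \eqref{EqSha} — here the finiteness of $Sel_A(F)_\l$ gives $Sel_A(F)_\l\simeq\ts(A/F)[\l^\infty]$ — and likewise absorb the local factors: for $v$ with $\cd_\l(K_w)=1$, the factor $|H^1(F_v,A[\l^\infty])|/|A(F_v)[\l^\infty]|$ from Theorem \ref{ThmEuCh} combines with $|Ker(\gamma_v)|=|H^1(G_v,A(K_w)[\l^\infty])|$ and $|Coker(\gamma_v)|=|H^2(G_v,A(K_w)[\l^\infty])|$ and with $\chi(G_v,A(K_w)[\l^\infty])$ to collapse, after using $\chi(G_v,A(K_w)[\l^\infty])=|A(F_v)[\l^\infty]|\cdot|H^2(G_v,\cdot)|/|H^1(G_v,\cdot)|$ (valid since $\cd_\l(K_w)=1$ so only $i=0,1,2$ survive), into exactly $|A(F_v)[\l^\infty]|/\chi(G_v,A(K_w)[\l^\infty])$; for $v$ with $\cd_\l(K_w)=0$ one has $\chi(G_v,A(K_w)[\l^\infty])$ related to $|A(F_v)[\l^\infty]|$ and $|H^1(F_v,A[\l^\infty])|$ via the Hochschild--Serre isomorphisms, and the surviving factor is $|H^1(F_v,A[\l^\infty])|$. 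The one remaining global asymmetry, the appearance of both $|A(F)[\l^\infty]|$ and $|A^t(F)[\l^\infty]|$ in the denominator, comes from the fact that Lemma \ref{PropH2Zero} and the descent are applied once to $A$ and once implicitly to $A^t$ through $Coker(\alpha)$ (the cokernel of the global-to-$G$-invariants map on Selmer groups dualizes to an $A^t$ object), or more directly from the Poitou--Tate / Cassels--Tate pairing identity $|Sel_A(F)_\l|\cdot|A^t(F)[\l^\infty]| $ balancing against the global $H^1$ count. The main obstacle I anticipate is precisely the careful bookkeeping of which side of the $A$/$A^t$ duality each order lands on, so that the $|A^t(F)[\l^\infty]|$ factor emerges with the correct exponent; everything else is diagram-chasing with orders of finite groups.
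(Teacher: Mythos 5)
Your overall strategy --- deducing Theorem \ref{ThmEuCh2} from Theorem \ref{ThmEuCh} by converting the global ratio $|H^1(F_S/F,A[\l^\infty])|/|A(F)[\l^\infty]|$ into Tate--Shafarevich terms --- is viable and genuinely different from the paper, which does not start from the formula of Theorem \ref{ThmEuCh} but instead recomputes $\chi(G,Sel_A(K)_\l)=|Sel_A(K)_\l^G|/|Coker(\psi_K^G)|$ directly, using $Coker(\psi_K^G)\simeq H^1(G,Sel_A(K)_\l)$ from \eqref{fiveterm} and the two snake-lemma sequences of \eqref{Diag1} and \eqref{Diag2}. However, your argument breaks at its pivot point: you assert that $Coker(\psi_F)$ and $Coker(\psi_K^G)$ vanish. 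Neither does. Taking $G$-invariants of a surjection is not right exact, and $Coker(\psi_K^G)\simeq H^1(G,Sel_A(K)_\l)$ is exactly the denominator of the Euler characteristic you are computing; more importantly, $Coker(\psi_F)\simeq (A^t(F)^*)^\vee$ by the duality statement of \cite[Proposition 4.4]{BV2}, so $|Coker(\psi_F)|=|A^t(F)[\l^\infty]|$. This is the \emph{only} source of the factor $|A^t(F)[\l^\infty]|$ in the denominator of the final formula; with the cokernels set to zero, your alternating products would produce a formula missing that factor and with the wrong relation between $|H^1(F_S/F,A[\l^\infty])|$ and $|\ts(A/F)[\l^\infty]|$. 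You correctly flag the $A$/$A^t$ bookkeeping as the danger, but the mechanism you gesture at (attributing it to $Coker(\alpha)$) is not the right one, and no correct substitute is supplied.

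If you want to complete your route, the missing ingredient is the single identity obtained from the exact sequence $Sel_A(F)_\l\iri H^1(F_S/F,A[\l^\infty])\to\prod_{v\in S}H^1(F_v,A[\l^\infty])\sri Coker(\psi_F)$ together with $|Sel_A(F)_\l|=|\ts(A/F)[\l^\infty]|$ (from \eqref{EqSha}) and $|Coker(\psi_F)|=|A^t(F)[\l^\infty]|$, namely
\begin{equation*}
|H^1(F_S/F,A[\l^\infty])|=\frac{|\ts(A/F)[\l^\infty]|}{|A^t(F)[\l^\infty]|}\cdot\prod_{v\in S}|H^1(F_v,A[\l^\infty])|\,.
\end{equation*}
Substituting this into \eqref{EqChi} gives $\chi(G,Sel_A(K)_\l)=\chi(G,A(K)[\l^\infty])\,\frac{|\ts(A/F)[\l^\infty]|}{|A(F)[\l^\infty]||A^t(F)[\l^\infty]|}\prod_{v\in S}\frac{|A(F_v)[\l^\infty]|}{\chi(G_v,A(K_w)[\l^\infty])}$, and then splitting the product according to $\cd_\l(K_w)$ --- using that for $\cd_\l(K_w)=0$ Hochschild--Serre gives $\chi(G_v,A(K_w)[\l^\infty])=|A(F_v)[\l^\infty]|/|H^1(F_v,A[\l^\infty])|$ --- yields exactly the claimed formula. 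Your local analysis for $\cd_\l(K_w)=1$ is fine; on this route the descent diagrams and Proposition \ref{Gamma} are not needed at all, and invoking them with vanishing cokernels is precisely where the argument fails.
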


\begin{proof}
Since we are assuming that $Sel_{A^t}(F)_\l$ is finite and $\psi_K$ is surjective, then
equation \eqref{fiveterm} shows that $Coker(\psi_K^G)\simeq H^1(G,Sel_A(K)_\l)$. Therefore
\[ \chi(G,Sel_A(K)_\l)=\frac{|Sel_A(K)_\l^G|}{|Coker(\psi_K^G)|}\ .\]
The snake lemma sequence of diagram \eqref{Diag2} yields
\[|Coker(\psi_K^G)|\!=\! \frac{|Coker(\g')||Coker(\psi_F)|}{|Ker(\gamma')|}\!=\! \frac{|Coker(\psi_F)||Ker(\beta')||Coker(\gamma)|}{|Coker(\beta')||Ker(\gamma)|}\ .\]
Using the snake lemma sequence of diagram \eqref{Diag1} one gets
\[ \begin{array}{lcl} \displaystyle{\frac{|Ker(\beta')|}{|Coker(\beta')|} }& = & \displaystyle{\frac{|Ker(\beta)|}{|Coker(\beta)|}
\cdot\frac{|Coker(\alpha)|}{|Ker(\alpha)|} } \\
\ & = & \displaystyle{ \frac{|H^1(G,A(K)[\l^\infty])|}{|H^2(G,A(K)[\l^\infty])|}\cdot\frac{|Sel_A(K)_\l^G|}{|Sel_A(F)_\l|} }\\
\ & = & \displaystyle{ \chi(G,A(K)[\l^\infty])^{-1} \frac{|A(F)[\l^\infty]||Sel_A(K)_\l^G|}{|\ts(A/F)[\l^\infty]|}\ . } \end{array} \]
Substituting in $Coker(\psi_K^G)$ one gets
\[ |Coker(\psi_K^G)|= 
\frac{|Coker(\psi_F)||Sel_A(K)_\l^G||A(F)[\l^\infty]||Coker(\gamma)|}{\chi(G,A(K)[\l^\infty])|\ts(A/F)[\l^\infty]||Ker(\gamma)|} \]
and
\[ \chi(G,Sel_A(K)_\l) = \chi(G,A(K)[\l^\infty]) \frac{|\ts(A/F)[\l^\infty]||Ker(\gamma)|}{|Coker(\psi_F)||A(F)[\l^\infty]||Coker(\gamma)|} \ .\]
The cardinality of $Ker(\g)$ and $Coker(\gamma)$ can be taken from Proposition \ref{Gamma}. 
To conclude observe that $Sel_{A^t}(F)_\l$ finite yields 
\[ Coker(\psi_F) \simeq (A^t(F)^*)^\vee \]
(where the $^*$ denotes the $\l$-adic completion, see, e.g., \cite[Proposition 4.4]{BV2}). Hence, since $A^t(F)$ is finite by hypothesis,
\[ |Coker(\psi_F)| = |(A^t(F)^*)^\vee| = |\il{n} A^t(F)/\l^n | = |A^t(F)[\l^\infty]| \ . \qedhere\]
\end{proof}

\begin{ex}
 {\em Let $K=F(A[\l^\infty])$ as in the example after Theorem \ref{ThmEuCh}. Suppose that all hypotheses of Theorem \ref{ThmEuCh2}
are verified and that all primes in $S$ are of split multiplicative reduction. In this case, the formula for the Euler characteristic 
of $Sel_A(K)_\l$ is the following:
\[ \chi(G,Sel_A(K)_\l)= \chi(G,A[\l^\infty])  \frac{|\ts(A/F)[\l^\infty]|}{|A(F)[\l^\infty]||A^t(F)[\l^\infty]|}
\cdot \prod_{v\in S} |H^1(F_v,A[\l^\infty])|\,. \]
In order to observe that this formula coincides with that of the previous example, just note that we have the following exact sequence
\[  Sel_A(F)_\l \iri H^1(F_S/F,A[\l^\infty]) \stackrel{\psi_F}{\longrightarrow} \prod_{v\in S} H^1(F_v,A[\l^\infty]) \sri Coker(\psi_F)\,\]
with $|Sel_A(F)_\l|=|\ts(A/F)[\l^\infty]|$ and $|Coker(\psi_F)| = |A^t(F)[\l^\infty]| $. }
\end{ex}

\section{Euler Characteristic for elliptic curves}\label{EllCur}
When $A=\mathcal{E}$ is an elliptic curve we can find an explicit connection between Euler characteristic  and
the Hasse-Weil $L$-function. Replacing, if needed, $F$ by a finite extension, we can (and will) assume that 
the places of multiplicative reductions are of split multiplicative reduction.\\
Let $\wt{\mathcal{E}}_v$ be the image of $\mathcal{E}$ under the usual reduction map at any prime $v$.
We denote by $\wt{\mathcal{E}}_{v,ns}$ the group of non singular points of $\wt{\mathcal{E}}_v$. Moreover, we define
two subset of $\mathcal{E}(F_v)$ as follows:
\begin{align*}
\mathcal{E}_0(F_v)=\{P\in \mathcal{E}(F_v) : \wt{P}\in \wt{\mathcal{E}}_{v,ns}(\F_v)\}\ ,\ \mathcal{E}_1(F_v)=\{P\in \mathcal{E}(F_v) : \wt{P}=O\} \ .
\end{align*}
Finally, let $c_v(\mathcal{E})=|\mathcal{E}(F_v)/\mathcal{E}_0(F_v)|$ be the local Tamagawa factor
of $\mathcal{E}$ at $v$ and $L_v(\mathcal{E},s)$ the Euler factor at $v$ of the Hasse-Weil
$L$-function $L(\mathcal{E},s)$.

\begin{prop}\label{EulFac}
The group $H^1(F_v,\mathcal{E}[\l^\infty])$ is finite and has order $\left|\frac{L_v(\mathcal{E},1)}{c_v}\right|_\l$
(where $|\cdot|_\l$ denotes the normalized $\l$-adic absolute value, i.e., with $|\l|_\l=\l^{-1}\,$).
Moreover, if $v\in S$ is of additive reduction and $\l\geq 5$,
then $|H^1(F_v,\mathcal{E}[\l^\infty])|=1$ (in particular $|Ker(\g_v)|=1$ for those primes).
\end{prop}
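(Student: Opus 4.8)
The plan is to compute $|H^1(F_v,\mathcal{E}[\l^\infty])|$ via the local Euler characteristic formula together with local Tate duality, and then identify the resulting expression with $|L_v(\mathcal{E},1)/c_v|_\l$ by a case analysis on the reduction type. Since $\l\neq p$, the module $\mathcal{E}[\l^\infty]$ has order prime to the residue characteristic, so the local Euler characteristic of $F_v$ is trivial: $\chi(F_v,\mathcal{E}[\l^n])=1$ for every $n$, hence $|H^0(F_v,\mathcal{E}[\l^n])|\cdot|H^2(F_v,\mathcal{E}[\l^n])|=|H^1(F_v,\mathcal{E}[\l^n])|$. By Tate local duality (\cite[Theorem 7.2.6]{NSW}) $H^2(F_v,\mathcal{E}[\l^n])$ is dual to $H^0(F_v,\mathcal{E}[\l^n])$ — here one uses the Weil pairing to identify $\mathcal{E}[\l^n]$ with its own Cartier dual up to a Tate twist, which is harmless after noting $\mu_{\l^\infty}\subset F_v^{sep}$-cohomology contributions cancel in the passage to $\mathcal{E}[\l^\infty]$ — and taking direct limits gives that $H^1(F_v,\mathcal{E}[\l^\infty])$ is finite with $|H^1(F_v,\mathcal{E}[\l^\infty])| = |H^0(F_v,\mathcal{E}[\l^\infty])|^2 / |\mathcal{E}(F_v)\otimes\Q_\l/\Z_\l|$, and since $\mathcal{E}(F_v)$ is a compact $\l$-adic Lie group of dimension $0$ over a local field of residue characteristic $\neq\l$, this last term is $0$; more precisely one gets $|H^1(F_v,\mathcal{E}[\l^\infty])| = |\mathcal{E}(F_v)[\l^\infty]|^2 \cdot |L_v(\mathcal{E},1)|_\l^{-1}$ after tracking the contribution of the unramified part.

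The second ingredient is the standard identification of $|\mathcal{E}(F_v)[\l^\infty]|$ with the $\l$-part of $L_v(\mathcal{E},1)^{-1}$ corrected by the Tamagawa number. Recall $L_v(\mathcal{E},s)^{-1} = \det(1 - \mathrm{Frob}_v\, (Nv)^{-s} \mid T_\l\mathcal{E}^{I_v})$, so that $L_v(\mathcal{E},1)^{-1}$ measures $|(\mathcal{E}[\l^\infty]^{I_v})_{\mathrm{Frob}_v}| = |H^1(F_v^{ur}/F_v, \mathcal{E}[\l^\infty]^{I_v})|$, while $|\mathcal{E}[\l^\infty]^{I_v,\,\mathrm{Frob}_v}| = |\mathcal{E}(F_v)[\l^\infty]|$ up to the ramified part. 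Combining with the relation $|\wt{\mathcal{E}}_{v,ns}(\F_v)|_\l$ versus $c_v$ via the filtration $\mathcal{E}_1(F_v)\subset\mathcal{E}_0(F_v)\subset\mathcal{E}(F_v)$ — where $\mathcal{E}_1(F_v)$ is pro-$p$ hence $\l$-divisible-free-quotient trivial, $\mathcal{E}_0(F_v)/\mathcal{E}_1(F_v)\cong\wt{\mathcal{E}}_{v,ns}(\F_v)$, and $\mathcal{E}(F_v)/\mathcal{E}_0(F_v)$ has order $c_v$ — one arrives at $|H^1(F_v,\mathcal{E}[\l^\infty])| = |L_v(\mathcal{E},1)/c_v|_\l$. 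I would organize this as: (i) good reduction, where $c_v=1$, $\mathcal{E}[\l^\infty]$ is unramified, and the formula reduces to $|L_v(\mathcal{E},1)|_\l = |\det(1-\mathrm{Frob}_v)\mid \mathcal{E}[\l^\infty])|^{-1}$; (ii) split multiplicative reduction, where the Tate/Mumford parametrization $\mathcal{E}(\ov{F_v}) \cong \ov{F_v}^\times/q_{\mathcal{E}}^{\Z}$ gives the torsion explicitly and $L_v(\mathcal{E},1)^{-1} = 1 - (Nv)^{-1}$, $c_v = v(q_{\mathcal{E}}) = -\mathrm{ord}_v(j_{\mathcal{E}})$; (iii) additive reduction, treated in the last sentence.

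For the additive case with $\l\geq 5$, the point is that over $\ov{F_v}$ the curve acquires semistable (in fact good, after the analysis, or multiplicative) reduction over a tamely ramified extension of degree dividing $12$, in particular of degree prime to $\l$. Then $\mathcal{E}[\l^\infty]^{I_v}$ is controlled by the inertia action, which factors through a finite quotient of order dividing $24$ (the automorphism/twist group), again prime to $\l$; a transfer/restriction argument shows $H^1(F_v,\mathcal{E}[\l^\infty])$ injects into $H^1$ of this tame quotient, which is killed by its order and hence trivial since $\mathcal{E}[\l^\infty]$ is $\l$-divisible with no $\l$-torsion fixed. Equivalently, one checks directly that $\mathcal{E}(F_v)[\l^\infty] = 0$ for $\l\geq 5$ at a place of additive reduction (this is where $\l\geq 5$ is used: the possible torsion subgroups over local fields together with the Néron–Ogg–Shafarevich type constraints force triviality), and the Euler characteristic computation then gives $|H^1(F_v,\mathcal{E}[\l^\infty])| = 1$; the parenthetical statement $|Ker(\g_v)|=1$ follows because $Ker(\g_v)$ embeds in $H^1(F_v,\mathcal{E}[\l^\infty])$ (from the description in Proposition \ref{Gamma}, $Ker(\g_v) \simeq H^1(G_v,\mathcal{E}(K_w)[\l^\infty])$ when $\cd_\l(K_w)=1$, and this is $0$ once $\mathcal{E}(K_w)[\l^\infty]$ is suitably small, or directly $Ker(\g_v)\hookrightarrow H^1(F_v,\mathcal{E}[\l^\infty])=0$).

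The main obstacle I anticipate is bookkeeping the Tate twist and the unramified ("Frobenius") contribution correctly in step one so that the factors of $|\mathcal{E}(F_v)[\l^\infty]|$ cancel precisely against the numerator of the local Euler characteristic/duality computation, leaving exactly $|L_v(\mathcal{E},1)/c_v|_\l$ with no spurious powers of $\l$; getting the multiplicative-reduction constant $c_v$ to match $\mathrm{ord}_v(q_{\mathcal{E}})$ requires care with the exact sequence relating $\mathcal{E}(F_v)$ to $\wt{\mathcal{E}}_{v,ns}(\F_v)$ and its $\l$-adic behaviour. The additive case, by contrast, is comparatively soft once one invokes potential semistability and $\l\geq 5$.
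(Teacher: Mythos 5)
Your overall skeleton matches the paper's: local Tate duality reduces $|H^1(F_v,\mathcal{E}[\l^\infty])|$ to an invariant of $\mathcal{E}(F_v)$; the filtration $\mathcal{E}_1(F_v)\subset\mathcal{E}_0(F_v)\subset\mathcal{E}(F_v)$ (with $\mathcal{E}_1(F_v)$ pro-$p$, hence invisible to $\l$) computes that invariant as the $\l$-part of $c_v\cdot|\wt{\mathcal{E}}_{v,ns}(\F_v)|$; and $|\wt{\mathcal{E}}_{v,ns}(\F_v)|=|\F_v|\,L_v(\mathcal{E},1)^{-1}$ converts this to $\left|\frac{L_v(\mathcal{E},1)}{c_v}\right|_\l$ since $|\F_v|$ is a power of $p$. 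The paper obtains the duality input in one stroke by citing $\mathcal{E}(F_v)^*\simeq H^1(F_v,\mathcal{E}[\l^\infty])^\vee$ (Milne, Remark I.3.6), with no case analysis on reduction type; your additive-reduction paragraph also agrees in substance with the paper's (which simply notes $c_v\leq 4$ and that $\wt{\mathcal{E}}_{v,ns}(\F_v)$ is a $p$-group, so $\l\geq 5$ kills everything).

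There is, however, a genuine error in your first step: the intermediate formula $|H^1(F_v,\mathcal{E}[\l^\infty])|=|\mathcal{E}(F_v)[\l^\infty]|^2\cdot|L_v(\mathcal{E},1)|_\l^{-1}$ is false in general and cannot feed into the stated conclusion. What the trivial local Euler characteristic plus Weil self-duality actually gives is the finite-level identity $|H^1(F_v,\mathcal{E}[\l^n])|=|\mathcal{E}(F_v)[\l^n]|^2$; in the passage to the direct limit the Kummer subgroup $\mathcal{E}(F_v)/\l^n$ (which has order $|\mathcal{E}(F_v)[\l^n]|$, because $\mathcal{E}_1(F_v)$ is uniquely $\l$-divisible) dies, so exactly one factor survives and $|H^1(F_v,\mathcal{E}[\l^\infty])|=|\mathcal{E}(F_v)[\l^\infty]|=|\mathcal{E}(F_v)^*|$ --- there is no residual factor of $|L_v(\mathcal{E},1)|_\l^{-1}$ to ``track.'' Concretely, at a place of split multiplicative reduction with $\l\mid c_v$ your formula evaluates to $|L_v(\mathcal{E},1)|_\l\,|c_v|_\l^{-2}$ rather than $|L_v(\mathcal{E},1)/c_v|_\l$, so the two expressions genuinely disagree. (The assertion that $\mathcal{E}(F_v)$ is a ``compact $\l$-adic Lie group'' is also off --- it is pro-$p$ by finite --- though the conclusion $\mathcal{E}(F_v)\otimes\Q_\l/\Z_\l=0$ you draw from it is correct.) Once the first step is replaced by $|H^1(F_v,\mathcal{E}[\l^\infty])|=|\mathcal{E}(F_v)^*|$, your filtration paragraph completes the proof exactly as the paper does, and the parenthetical claim $|Ker(\g_v)|=1$ follows as you say from the inflation injection $Ker(\g_v)\simeq H^1(G_v,\mathcal{E}(K_w)[\l^\infty])\iri H^1(F_v,\mathcal{E}[\l^\infty])=0$.
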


\begin{proof}
From \cite[Remark I.3.6 ]{Mi1} we have the following isomorphisms:
\[ \mathcal{E}(F_v)^*\simeq H^1(F_v,\mathcal{E}[\l^\infty])^\vee \ , \]
where $\mathcal{E}(F_v)^*\simeq\begin{displaystyle} \il n \mathcal{E}(F_v)/\l^n \mathcal{E}(F_v)\,\end{displaystyle}$.
 Consider the exact sequence
\[  \mathcal{E}_1(F_v) \iri \mathcal{E}_0(F_v) \sri \wt{\mathcal{E}}_{v,ns}(\F_v) \ . \]
Taking inverse limits of appropriate quotients we obtain
\[  \mathcal{E}_1(F_v)^* \iri \mathcal{E}_0(F_v)^* \sri (\wt{\mathcal{E}}_{v,ns}(\F_v))^*=\wt{\mathcal{E}}_{v,ns}(\F_v)[\l^\infty] \ ,\]
because $\wt{\mathcal{E}}_{v,ns}(\F_v)$ is finite. Since $\mathcal{E}_1(F_v)$ has no points of order $\l$ (\cite[Proposition VII.3.1]{Si}), 
the first term in the previous sequence is trivial and
\[  \mathcal{E}_0(F_v)^*\simeq (\wt{\mathcal{E}}_{v,ns}(\F_v))^* \ .\]
By the exact sequence
\[  \mathcal{E}_0(F_v)^* \iri \mathcal{E}(F_v)^* \sri (\mathcal{E}(F_v)/\mathcal{E}_0(F_v))^*  \ ,  \]
we deduce that the order of $\mathcal{E}(F_v)^*$ is the exact power of
$\l$ dividing the factor $c_v(\mathcal{E})|\wt{\mathcal{E}}_{v,ns}(\F_v)|$. By
\cite[Appendix C]{Si}
\[  |\wt{\mathcal{E}}_{v,ns}(\F_v)|=|\F_v|L_v(\mathcal{E},1)^{-1}\ . \]
Since $|\F_v|$ is a power of $p$ the first claim follows.\\ 
If $v\in S$ is of additive reduction, by \cite[VII, Theorem 6.1]{Si} we have that $\mathcal{E}(F_v)/\mathcal{E}_0(F_v)$ has order at most $4$
and (by \cite[VII, Proposition 5.1 (c)]{Si}) $\wt{\mathcal{E}}_{v,ns}(\F_v)$ is a $p$-group. 
Thus, if $\l\geqslant 5$, then $\l$ does not divide $c_v|\wt{\mathcal{E}}_{v,ns}(\F_v)[\l^\infty]|$
and it follows that 
\[  |H^1(F_v,\mathcal{E}[\l^\infty])|= 1\ . \qedhere\] 
\end{proof}

\begin{thm}\label{EuChEllCur}
With hypotheses as in Theorem \ref{ThmEuCh}, if $\l\geq 5$ we have
\begin{equation}\label{EqCharE}
\begin{array}{ll} \!\!\chi(G,Sel_{\mathcal{E}}(K)_\l) \!\!=\!\! & \displaystyle{ \!\!\!\!\chi(G,\mathcal{E}(K)[\l^\infty]) \frac{|\ts(\mathcal{E}/F)[\l^\infty]|}{|\mathcal{E}(F)[\l^\infty]|^2} }
\cdot \displaystyle{ \!\!\!\!\!\!\!\prod_{\begin{subarray}{c} v\in S \\ \cd_\l(K_w)=1 \end{subarray}} 
\!\!\!\!\!\!\!\frac{|\mathcal{E}(F_v)[\l^\infty]|}{\chi(G_v,\mathcal{E}(K_w)[\l^\infty])} }\cdot \\ [6ex]
 & \cdot \displaystyle{\!\! \prod_{\begin{subarray}{c} v\in S\\ \cd_\l(K_w)=0\\ v\,{\rm good\,red.} \end{subarray}} 
\!\!\!|L_v(\mathcal{E},1)|_\l \cdot\!\!\! \prod_{\begin{subarray}{c} v\in S\\ \cd_\l(K_w)=0\\ v\,{\rm split\, mult.\,red.}\end{subarray}} 
\!\!\!\!\left|\frac{L_v(\mathcal{E},1)}{c_v}\right|_\l }\ . \end{array}
\end{equation}
\end{thm}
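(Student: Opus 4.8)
The plan is to derive Theorem~\ref{EuChEllCur} directly from Theorem~\ref{ThmEuCh2} by specializing $A=\mathcal{E}$ and feeding in the local computation of Proposition~\ref{EulFac}. Since $\mathcal{E}$ is principally polarized we may identify $\mathcal{E}^t=\mathcal{E}$, so the global factor $|\ts(A/F)[\l^\infty]|/(|A(F)[\l^\infty]||A^t(F)[\l^\infty]|)$ in Theorem~\ref{ThmEuCh2} becomes exactly $|\ts(\mathcal{E}/F)[\l^\infty]|/|\mathcal{E}(F)[\l^\infty]|^2$, which is the first factor on the right of \eqref{EqCharE}. The term $\chi(G,\mathcal{E}(K)[\l^\infty])$ is carried over unchanged, and the product over primes $v\in S$ with $\cd_\l(K_w)=1$ in Theorem~\ref{ThmEuCh2} is already in the desired form. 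So the only thing left to do is rewrite the product over primes with $\cd_\l(K_w)=0$, which in Theorem~\ref{ThmEuCh2} reads $\prod |H^1(F_v,\mathcal{E}[\l^\infty])|$.

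Next I would invoke Proposition~\ref{EulFac}. It tells us that for every $v\in S$ the group $H^1(F_v,\mathcal{E}[\l^\infty])$ is finite of order $\left|\frac{L_v(\mathcal{E},1)}{c_v}\right|_\l$, and moreover that when $v$ is of additive reduction and $\l\geq 5$ this order is $1$. Hence, under the standing assumption that $\l\geq 5$, the additive-reduction primes contribute trivially to the product and may be dropped; only the primes of good reduction and of split multiplicative reduction remain (recall we arranged at the start of Section~\ref{EllCur} that all multiplicative-reduction places are split). For a place of good reduction $c_v=1$, so $|H^1(F_v,\mathcal{E}[\l^\infty])|=|L_v(\mathcal{E},1)|_\l$; for a place of split multiplicative reduction the factor $c_v$ survives and $|H^1(F_v,\mathcal{E}[\l^\infty])|=\left|\frac{L_v(\mathcal{E},1)}{c_v}\right|_\l$. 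Substituting these into $\prod_{\cd_\l(K_w)=0}|H^1(F_v,\mathcal{E}[\l^\infty])|$ splits that single product into the two products appearing on the second line of \eqref{EqCharE}, completing the derivation.

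There is essentially no hard obstacle here: the theorem is a corollary obtained by substituting the local formula of Proposition~\ref{EulFac} into the general formula of Theorem~\ref{ThmEuCh2}. The only point requiring a word of care is the bookkeeping of reduction types — one must note that the hypothesis $\l\geq 5$ is what kills the additive-reduction contributions (through the Kodaira/Tamagawa bound $c_v\leq 4$ and the fact that $\wt{\mathcal{E}}_{v,ns}(\F_v)$ is a $p$-group, already recorded in Proposition~\ref{EulFac}), and that the reduction at places with $\cd_\l(K_w)=0$ partitions cleanly into good and split multiplicative, so that no ``$c_v$'' term is ever lost or doubled. I would also remark in passing that $|L_v(\mathcal{E},1)|_\l$ for a place of good reduction is automatically a $\l$-adic unit unless $\l\mid |\wt{\mathcal{E}}_v(\F_v)|$, so only finitely many terms in the displayed products are nontrivial, which makes the products well defined.

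\begin{proof}
Since $\mathcal{E}$ is an elliptic curve, the canonical principal polarization gives an identification $\mathcal{E}^t\simeq\mathcal{E}$, and in particular $Sel_{\mathcal{E}^t}(F)_\l$ finite is the same condition as $Sel_{\mathcal{E}}(F)_\l$ finite and $|\mathcal{E}^t(F)[\l^\infty]|=|\mathcal{E}(F)[\l^\infty]|$. Applying Theorem~\ref{ThmEuCh2} with $A=\mathcal{E}$ therefore yields
\begin{align*}
\chi(G,Sel_{\mathcal{E}}(K)_\l)= & \ \chi(G,\mathcal{E}(K)[\l^\infty]) \cdot \frac{|\ts(\mathcal{E}/F)[\l^\infty]|}{|\mathcal{E}(F)[\l^\infty]|^2}\cdot \\
 & \cdot \prod_{\begin{subarray}{c} v\in S\\ \cd_\l(K_w)=1\end{subarray}}\!\!\!\frac{|\mathcal{E}(F_v)[\l^\infty]|}{\chi(G_v,\mathcal{E}(K_w)[\l^\infty])} \cdot
 \prod_{\begin{subarray}{c} v\in S\\ \cd_\l(K_w)=0\end{subarray}}\!\!\!|H^1(F_v,\mathcal{E}[\l^\infty])|\,.
\end{align*}
It remains to rewrite the last product. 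By Proposition~\ref{EulFac} each group $H^1(F_v,\mathcal{E}[\l^\infty])$ is finite of order $\left|\frac{L_v(\mathcal{E},1)}{c_v}\right|_\l$; moreover, since $\l\geq 5$, if $v$ is of additive reduction then $|H^1(F_v,\mathcal{E}[\l^\infty])|=1$, so such places contribute trivially and may be omitted. By the reduction made at the beginning of this section, every remaining place is of good or of split multiplicative reduction. For $v$ of good reduction one has $c_v=1$, hence $|H^1(F_v,\mathcal{E}[\l^\infty])|=|L_v(\mathcal{E},1)|_\l$; for $v$ of split multiplicative reduction $|H^1(F_v,\mathcal{E}[\l^\infty])|=\left|\frac{L_v(\mathcal{E},1)}{c_v}\right|_\l$. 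Substituting,
\[ \prod_{\begin{subarray}{c} v\in S\\ \cd_\l(K_w)=0\end{subarray}}\!\!\!|H^1(F_v,\mathcal{E}[\l^\infty])| = \prod_{\begin{subarray}{c} v\in S\\ \cd_\l(K_w)=0\\ v\,{\rm good\,red.}\end{subarray}}\!\!\!|L_v(\mathcal{E},1)|_\l \cdot \prod_{\begin{subarray}{c} v\in S\\ \cd_\l(K_w)=0\\ v\,{\rm split\, mult.\,red.}\end{subarray}}\!\!\!\left|\frac{L_v(\mathcal{E},1)}{c_v}\right|_\l\,, \]
and inserting this into the displayed formula for $\chi(G,Sel_{\mathcal{E}}(K)_\l)$ gives \eqref{EqCharE}.
\end{proof}
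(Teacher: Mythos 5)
Your proposal is correct and follows exactly the route of the paper's own (one-line) proof: specialize Theorem~\ref{ThmEuCh2} to $A=\mathcal{E}$, use self-duality $\mathcal{E}^t=\mathcal{E}$ for the global factor, and substitute Proposition~\ref{EulFac} into the $\cd_\l(K_w)=0$ product, with $\l\geq 5$ killing the additive-reduction terms and $c_v=1$ at good primes. You have merely written out the bookkeeping that the paper leaves implicit.
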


\begin{proof}
Just adjust the formula of Theorem \ref{ThmEuCh2} using Proposition \ref{EulFac} and recall that an elliptic curve is self dual, i.e.,  
$\mathcal{E}^t=\mathcal{E}$.\end{proof}

\begin{ex}
{\em Suppose that $G\simeq \Z_\l$, i.e., $K/F$ is the arithmetic $\Z_\l$-extension of $F$ 
(the only one available here, see \cite[Proposition 4.3]{BL2}). Since there is no ramification,
all elements of $S$ are of bad reduction and $\cd_\l(K_w)=0$ cannot happen.
By \cite[Theorem 4.2]{BLV} we know that $\mathcal{E}(K)[\l^\infty]$ 
is a finite group, then $|H^0(G,\mathcal{E}(K)[\l^\infty])|=|H^1(G,\mathcal{E}(K)[\l^\infty])|$ and
$|H^i(G,\mathcal{E}(K)[\l^\infty])|=0$ for any $i\geq 2$.
Hence 
\[  \chi(G,\mathcal{E}(K)[\l^\infty])=1 \ , \]
and with hypotheses as in Theorem \ref{ThmEuCh}, we have
\[  \chi(G,Sel_{\mathcal{E}}(K)_\l)= \frac{|\ts(\mathcal{E}/F)[\l^\infty]|}{|\mathcal{E}(F)[\l^\infty]|^2}\cdot
\prod_{\begin{subarray}{c} v\in S\end{subarray}} \frac{|\mathcal{E}(F_v)[\l^\infty]|}{\chi(G_v,\mathcal{E}(K_w)[\l^\infty])}\ . \] }
\end{ex}

\begin{ex}\label{ExEllCurl} 
{\em Let $F$ be of characteristic $p>3$; consider $K=F(\mathcal{E}[\l^\infty])$ with $\l\geq 5$ and assume $Sel_{\mathcal{E}}(F)_\l$ is finite.
In this setting the only primes in $S$ are the ones of bad reduction. Moreover, using the Tate parametrization it is not hard to check 
that primes in $S$ such that $\cd_\l(K_w)=0$ are exactly the split multiplicative reduction places and those in $S$ with 
$\cd_\l(K_w)=1$ the additive reduction ones. Moreover  
\[  \chi(G,\mathcal{E}(K)[\l^\infty])=1  \]
because of \cite[Theorem 1]{CS1}. Since in this case $\psi_K$ is surjective (see \cite[Theorem III.27]{S}), one has
\[\chi(G,Sel_{\mathcal{E}}(K)_\l)=\frac{|\ts(\mathcal{E}/F)[\l^\infty]|}{|\mathcal{E}(F)[\l^\infty]|^2}
\cdot\!\!\! \prod_{\begin{subarray}{c} v\in S\\  v\,{\rm split\, mult.\,red.}\end{subarray}}\!\!\! \left|\frac{L_v(\mathcal{E},1)}{c_v}\right|_\l\ . \]
For more details on this case see the (unpublished) thesis \cite{S}.}
\end{ex}

\subsection*{Acknowledgment}
The author would like to thank A. Bandini for his guidance and many helpful conversations and suggestions.

\end{document}